\def\a{\alpha}
\def\l{\lambda}
\def\g{\gamma}
\def\0{\bar{0}}
\def\1{\bar{1}}
\def\e{\epsilon}
\def\d{\delta}
\def\g{\mathfrak{g}}
\def\g{{\mathfrak g}}
\def\l{{\lambda}}
\newtheorem{lemma}{Lemma}[section]
\newtheorem{theorem}[lemma]{Theorem}
\newtheorem{proposition}[lemma]{Proposition}
\newtheorem{corollary}[lemma]{Corollary}
\title{ On the simplicity of Kac modules  for the restricted Lie superalgebra $gl(m,n)$}
\author{
Chaowen Zhang\\ Department of
Mathematics,\\ China university
 of Mining and Technology,\\ Xuzhou, 221116, Jiang Su, P. R. China}
\date{ }
\begin{document}
\maketitle

{\it  Mathematics Subject Classification (2000)}: 17B10; 17B50.

\section{Introduction}  Let $\g=gl(m,n)$ be the general linear Lie superalgebra  over a field $\mathbb F$ with $\text{char.}\mathbb F=p>2$. Then $\g$  is a restricted Lie superalgebra. The Kac module of $\g$ is a finite dimensional module (see Sec.2) induced from a simple $\g_{\0}$-module. In the characteristic zero case, the simplicity of Kac modules is determined by typical weights (see \cite{k1}) which were defined using a symmetric bilinear form on $H^*$, where $H$ is the maximal torus of $\g$ consisting of diagonal matrices.  In the case that $\text{char.}\mathbb F=p>2$, the simplicity of the Kac
modules was studied in \cite{z2,zs}. In the restricted case, it was shown in \cite[Th.2.2]{zs} that the simplicity of the Kac modules is determined by a polynomial. But the conclusion that this polynomial coincides with the polynomial $P(\l)$ defined in complex number case (\cite{k1}) modulo $p$ is stated without proof(\cite[Prop. 2.1]{zs}). In more general cases, the simplicity of the Kac modules was studied in \cite{z2}, in which one has to assume $p>nm$ to determine an analogous polynomial.\par

The main goal of the present paper is to present a characteristic free approach to determine the above-mentioned polynomial. Then we study its application to the nonrestricted simple modules for $\g$. The paper is arranged as follows. Sec. 2 is the preliminaries. In Sec. 3 we discuss the simplicity of the Kac modules.  In Sec. 4 we prove the main theorem.  By applying the main theorem  we show in Sec. 5 that,  under certain conditions, the $\chi$-reduced enveloping superalgebras $u_{\chi}(\g)$ and $u_{\chi}(\g_{\0})$ are Morita equivalent.

\section{Preliminaries}
Let $\g=gl(m,n)$ be the general linear Lie superalgebra(See \cite{k1}). Then  $\g$ has  a standard basis consisting of matrices $\{e_{ij}| 1\leq i,j\leq m+n\}$.   Set  $\mathcal I=\mathcal I_0\cup\mathcal I_1$, where $$\begin{aligned} \mathcal I_0&=\{(i,j)|1\leq i<j\leq m\quad \text{or}\quad m+1\leq i<j\leq m+n\},\\\mathcal I_1&=\{(i,j)|
 1\leq i\leq m<j\leq m+n\}.\end{aligned}$$
 We denote $e_{ji}$ with $j>i$  by $f_{ij}$. Then we get $\g_{\1}=\g_{-1}\oplus\g_{1}$, where $$\g_{1}=\langle e_{ij}|(i,j)\in\mathcal I_1\rangle \quad \g_{-1}=\langle f_{ij}|(i,j)\in\mathcal I_1\rangle .$$ Let $\g^+$(resp. $\g^-$) be the subalgebra $\g_{\0}+\g_1$(resp. $\g_{\0}+\g_{-1}$) of $\g$. The parity of the basis elements is given
by $$\bar{e}_{ij}=\bar f_{ij}=\begin{cases} \bar 0,&\text{if $(i,j)\in \mathcal I_0$ or $i=j$}\\\1, &\text{if $(i,j)\in \mathcal I_1$.}\end{cases}$$

 Let $ H=\langle e_{ii}|1\leq i\leq m+n\rangle,$  and let $T$ be the linear algebraic group consisting of $(m+n)\times (m+n)$ invertible diagonal matrices. Then we have $\text{Lie} (T)=H$. Let  $\Lambda =:X(T)=\mathbb Z\e_1+\mathbb Z\e_2+\cdots +\mathbb Z\e_{m+n}.$  The set of positive roots of
   $\g$ relative to $T$ is $\Phi^+=\Phi^+_{0}\cup\Phi^+_{1},$ where $$\Phi^+_{0}=\{\e_i-\e_j|(i,j)\in \mathcal I_0\}, \quad \Phi^+_{1}=\{\e_i-\e_j|(i,j)\in \mathcal I_1\}.$$Let  $$\rho_0(m,n)=1/2\sum_{\a\in\Phi^+_0}\a, \quad \rho_1 (m,n)=1/2\sum_{\a\in\Phi^+_{1}}\a,$$  and set $\rho(m,n)=:\rho_0(m,n)-\rho_1(m,n)\in\Lambda$.\par
 Denote by $N^+$(resp. $N^-$) the Lie sub-superalgebra of $\g$ spanned by the elements $e_{ij}, (i,j)\in\mathcal I$(resp. $f_{ij},(i,j)\in\mathcal I$). By the PBW theorem (\cite{bmp}) we have the triangular decomposition of $U(\g)$: $$U(\g)\cong U(N^-)\otimes U(H)\otimes U(N^+).$$
Let $h(U(\g))$ be the set of all homogeneous elements in $U(\g)$. For each $x\in h(U(\g))$, we have a derivation $[x,-]$ on $U(\g)$ defined by $$[x,y]=xy-(-1)^{\bar x\bar y}yx, y\in h(U(\g)).$$ It is easy to see that $$[x, y_1\cdots y_t]=\sum_{i=1}^t (-1)^{\bar x\sum^{i-1}_{k=1}\bar y_k}y_1\cdots [x,y_i]\cdots y_t,$$ for $y_1,\cdots, y_t\in h(U(\g))$.

 Let $\check{\e}_i$ be the 1-psg: $G_m \longrightarrow T$ such that each $t\in G_m$ is mapped into a diagonal matrix with all entries equal to 1 but the $i$th equal to $t$ if $i\leq m$, and $t^{-1}$ if $i>m$. Then the 1-psg's $\check{\e}_i$ form a $\mathbb Z$-basis of $Y(T)$. The nondegenerate paring(\cite{hu1}): $$X(T)\times Y(T)\longrightarrow \mathbb Z: (\l, \mu)\mapsto \langle\l, \mu\rangle $$ induces a symmetric bilinear form  on $\Lambda$ defined by $$(\e_i,\e_j)=\langle \e_i, \check{\e}_j\rangle=\begin{cases} \d_{ij}, &i\leq m\\-\d_{ij}, &i>m.\end{cases}$$

 Assume the Lie superalgebra $\g$ is defined over a field $\mathbb F$. We identify $H^*$ with $\Lambda \otimes _{\mathbb Z}\mathbb F$. Then the bilinear form above is extended naturally to $H^*$. In case $\text{char.} \mathbb F=0$, this is exactly the one given in \cite{k1}.  Suppose $char.\mathbb F=p>0$.  For each $\l\in \Lambda=X(T)$, the tangent map $d\l$: $H\longrightarrow \mathbb F$, by \cite[1.2]{j1}, is a linear map satisfying $d\l(h^{[p]})=(d\l(h))^p$ for all $h\in H$. By identifying $d\check{\e}_i(1)$ with $e_{ii}$ if $i\leq m$ and $-e_{ii}$ if $i>m$, we see that $d\l$ is exactly $\l\otimes 1\in\Lambda\otimes_{\mathbb Z}\mathbb F= H^*$.
 For each $\l\in\Lambda$, we write $\l\otimes 1\in H^*$ also as $\l$.\par    Define the polynomial $f_{m,n}(\l)$ on $H^*$ by $$f_{m,n}(\l)=\Pi_{\a\in\Phi^+_1}(\l+\rho(m,n), \a), \l\in H^*.$$  $\l\in H^*$ is referred to as {\sl typical} if $f_{m,n}(\l)\neq 0$\par

\section{The simplicity of Kac modules}
Let $\g$ be the Lie superalgebra $gl(m,n)$ over a field $\mathbb F$. Let $U(\g)$(resp. $U(\g^+)$; $U(\g_{\0})$; $U(\g_{-1})$) be the universal enveloping superalgebra of $\g$(resp. $\g^+$; $\g_{\0}$; $\g_{-1}$)(see \cite{bmp}).

Let $M$ be a $U(\g_{\0})$-module. For $\mu\in H^*$, define the $\mu$-weight space of $M$ by $$M_{\mu}=\{x\in M|hx=\mu (h)x\quad\text{for all}\quad h\in H\}.$$
 For any positive odd root $\a=\e_i-\e_j\in \Phi^+_1$, $(i,j)\in\mathcal I_1$,  let $h_{\a}=[e_{ij}, f_{ij}]=e_{ii}+e_{jj}$. It is easy to check that  $\mu (h_{\a})=(\mu, h_{\a})$, for any $\mu\in \Lambda, \a\in\Phi^+_1$. Therefore we get $h_{\a}x=(\a,\mu)x$ for any $x\in M_{\mu}$. If $x\in M_{\mu}$, then $$f_{ij}x\in M_{\mu-(\e_i-\e_j)}\quad\text{ and}\quad e_{ij}x\in M_{\mu +\e_i-\e_j}$$ for any $(i,j)\in\mathcal I_0$. A nonzero vector $v^+\in M_{\mu}$ is said to be maximal if $e_{ij}v^+=0$ for all $(i,j)\in \mathcal I_0$.\par
Let $M_0(\l)$ be a simple $U(\g_{\0})$-module generated by a maximal vector of weight $\l\in H^*$. We can view $M_0(\l)$ as a $U(\g^+)$-module by letting $\g_1$ act trivially on it. Then the induced $U(\g)$-module  $$K(\l)=U(\g)\otimes_{U(\g^+)}M_0(\l)$$ is called a Kac module. In case $\mathbb F=\mathbb C$, \cite[Prop. 2.9]{k1} says that $K(\l)$ is simple  if and only if $\l$ is typical.\par
 Note: In the case that $\text{char.}\mathbb F>0$, the maximal vector in the $U(\g_{\0})$-module $M_0(\l)$ and hence its weight $\l\in H^*$ may not be unique.\par

 Let $M_0(\l)$ be a simple $U(\g_{\0})$-module generated by a maximal vector of  weight $\l\in H^*$. If $\text{char.}\mathbb F= p>0$, then   $M_0(\l)$ is finite dimensional by the Jacobson's theorem(see \cite[Th. 2.4]{sf}), so is  the   Kac module $K(\l)$.\par
By definition, we have $$K(\l)\cong U(\g_{-1})\otimes_{\mathbb F}M_0(\l)$$ as $U(\g_{-1})$-modules.\par  Let us define an order on the set $\mathcal I_1$ as follows:  $$(i,j)\prec (s,t)\quad \text{if and only if}\quad j>t\quad \text{or}\quad j=t\quad \text{but}\quad i<s.$$ We write $(i,j)\preceq (s,t)$ if $(i,j)\prec (s,t)$ or $(i,j)=(s,t)$. Define $f_{ij}\prec f_{st}$ if $(i,j)\prec (s,t)$.  For each subset $I\subseteq \mathcal I_1$, let $f_I$ denote the product $\Pi_{(i,j)\in I} f_{ij}$ in this order. In particular, $f_{\phi}=1$. Then it is clear that $K(\l)$ has a basis  $$f_I \otimes v_i, I\subseteq \mathcal I_1, i=1,\cdots s,$$  where $v_1,\cdots, v_s$ is a basis of $M_0(\l)$. Note that $f_{ij}f_{st}=-f_{st}f_{ij}\in U(\g_{-1})\subseteq U(\g)$, for any $(i,j), (s,t)\in\mathcal I_1$.   Then it is easily seen that, by multiplying appropriate $f_{ij}'s$( $(i,j)\in \mathcal I_1$) to any nonzero element $x\in K(\l)$,  one obtains  $f_{\mathcal I_1}\otimes v$ with $0\neq v\in M_0(\l)$.\par
 It is easy to check that, for any $(i,j)\in \mathcal I_0$,  $$ (*)\quad f_{ij}f_{\mathcal I_1}=f_{\mathcal I_1}f_{ij}, \quad e_{ij}f_{\mathcal I_1}=f_{\mathcal I_1}e_{ij}.$$
For each subset $I\subseteq \mathcal I_1$, we denote by $e_I$ the product $\Pi_{(i,j)\in I}e_{ij}$ in the reversed order of $\mathcal I_1$. For each $(i,j)\in\mathcal I_1$, let $$>(i,j)(\text{resp. $\geq (i,j); <(i,j); \leq (i,j)$; })$$ denote the subset $$\begin{aligned} &\{(s,t)\in\mathcal I_1|(s,t)\succ (i,j)\}\\&(\text{resp.} \{(s,t)\in\mathcal I_1|(s,t)\succeq (i,j)\}; \\&\{(s,t)\in\mathcal I_1|(s,t)\prec (i,j)\};\\& \{(s,t)\in\mathcal I_1|(s,t)\preceq (i,j)\}).\end{aligned}$$ For $(i,j), (s,t)\in\mathcal I_1$ with $(i,j)\prec (s,t)$, we denote by $((i,j), (s,t))$ the subset $\{(i',j')\in\mathcal I_1|(i,j)\prec (i',j')\prec (s,t)\}$.\par

 Let us write $e_{\mathcal I_1}f_{\mathcal I_1}\in U(\g)$ in terms of the triangular decomposition of $U(\g)$(see Sec.2):  $$e_{\mathcal I_1}f_{\mathcal I_1}=f(h)+\sum u^-_iu^0_iu^+_i, u_i^{\pm}\in U(N^{\pm}), f(h), u^0_i\in U(H).$$ Note that $U(\g)$ is a $T$-module under the adjoint action. Denote by $\text{wt} (u)$ the weight of a weight vector $u\in U(\g)$. Then since $\text{wt}(e_{\mathcal I_1}f_{\mathcal I_1})=0$,
 so that $\text{wt}(u_i^+)=-\text{wt}(u^-_i)$
 we get $u_i^+\in\mathbb F$ if and only if $u^-_i\in\mathbb F$. \par Let $v_{\l}$ be a maximal vector in $M_0(\l)\subseteq K(\l)$. Then we get $$e_{\mathcal I_1}f_{\mathcal I_1}v_{\l}=f(h)v_{\l}=f(h)(\l)v_{\l}.$$
 The following proposition was proved in \cite{k2} in the case $\mathbb F=\mathbb C$, and proved in \cite{zs} in the restricted case.
 \begin{proposition} Let $\l\in H^*$. Then $ K(\l)$ is simple if and only if $ f(h)(\l)\neq 0$.
\end{proposition}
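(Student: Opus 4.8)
The plan is to exploit the $\bbz$-grading $\g=\g_{-1}\oplus\g_{\0}\oplus\g_1$ together with the fact that $K(\l)$ possesses a unique minimal submodule. This grading makes $U(\g)$ a $\bbz$-graded algebra and turns $K(\l)\cong U(\g_{-1})\otimes_{\mathbb F}M_0(\l)$ into a graded module $K(\l)=\bigoplus_{d=-N}^{0}K(\l)^{(d)}$, where $N=|\mathcal I_1|=mn$, the top piece $K(\l)^{(0)}$ is $M_0(\l)$, and the bottom piece $K(\l)^{(-N)}$ is $B:=f_{\mathcal I_1}\otimes M_0(\l)$. Since $v_\l$ generates $K(\l)$, simplicity is equivalent to $v_\l$ lying in every nonzero submodule, and the whole argument is to detect this through the single scalar $f(h)(\l)$.

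First I would record that every nonzero submodule $V$ contains $B$. Indeed, by the pushing-down remark already established, $V$ contains some $f_{\mathcal I_1}\otimes v$ with $v\neq0$; and by $(*)$ the even root vectors $e_{ij},f_{ij}$ with $(i,j)\in\mathcal I_0$ commute with $f_{\mathcal I_1}$ while $H$ acts only through a fixed weight shift, so $w\mapsto f_{\mathcal I_1}\otimes w$ is a lattice isomorphism between the $\g_{\0}$-submodules of $M_0(\l)$ and of $B$. As $M_0(\l)$ is $\g_{\0}$-simple, $B$ is $\g_{\0}$-simple, whence $U(\g_{\0})(f_{\mathcal I_1}\otimes v)=B\subseteq V$. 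Consequently $S:=U(\g)B$ is contained in every nonzero submodule, so it is the unique minimal (hence simple) submodule, and $K(\l)$ is simple if and only if $S=K(\l)$, i.e. if and only if $v_\l\in S$. Because $B$ is homogeneous of degree $-N$ and $U(\g)$ is graded, $S$ is a graded submodule; hence $v_\l\in S$ if and only if the degree-$0$ part $S^{(0)}=U(\g)^{(N)}B$ is nonzero.

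The key step is to identify $S^{(0)}$ with the image of a single $\g_{\0}$-endomorphism. Put $\zeta=e_{\mathcal I_1}f_{\mathcal I_1}$. Since $\dim\g_{\pm1}=N$, the lines $\mathbb F e_{\mathcal I_1}=\Lambda^{N}\g_1$ and $\mathbb F f_{\mathcal I_1}=\Lambda^{N}\g_{-1}$ are one-dimensional $\g_{\0}$-modules with the opposite characters $\pm2\rho_1(m,n)$, so $\zeta$ is homogeneous of degree $0$ and centralizes $\g_{\0}$; in particular $\zeta$ preserves $K(\l)^{(0)}=M_0(\l)$ and restricts to an endomorphism $\theta\in\mathrm{End}_{\g_{\0}}(M_0(\l))$. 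The computation already carried out gives $\theta(v_\l)=\zeta v_\l=f(h)(\l)v_\l$, and since $M_0(\l)=U(\g_{\0})v_\l$ is $\g_{\0}$-simple, $\ker\theta$ is a submodule, so $\theta=0$ exactly when $\theta(v_\l)=0$, i.e. exactly when $f(h)(\l)=0$. Finally, the graded PBW theorem for $U(\g)=U(\g_{-1})\otimes U(\g_{\0})\otimes U(\g_1)$ shows $U(\g)^{(N)}=U(\g_{\0})e_{\mathcal I_1}$ (the only way to reach degree $N$ is to take the top of $\Lambda^{\bullet}\g_1$, since $U(\g_{-1})$ contributes nonpositive degrees), whence $S^{(0)}=U(\g_{\0})e_{\mathcal I_1}B=U(\g_{\0})\,\theta(M_0(\l))=U(\g_{\0})\,\mathrm{im}\,\theta$. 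Thus $S^{(0)}\neq0$ if and only if $\mathrm{im}\,\theta\neq0$ if and only if $\theta\neq0$ if and only if $f(h)(\l)\neq0$, which combined with the previous paragraph proves the proposition.

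The main obstacle, and the point deserving the most care, is the claim that $\zeta$ centralizes $\g_{\0}$ and hence descends to a genuine $\g_{\0}$-module endomorphism $\theta$ of the simple module $M_0(\l)$: this is exactly what replaces the submodule-generation question by the vanishing of one scalar. It rests on the two commutation facts $[e_{ij},\zeta]=[f_{ij},\zeta]=0$ for $(i,j)\in\mathcal I_0$ (the relations $(*)$ and their transposes for $e_{\mathcal I_1}$) and the cancellation $[h,\zeta]=\big(2\rho_1(m,n)(h)-2\rho_1(m,n)(h)\big)\zeta=0$ for $h\in H$. The remaining inputs—that the induced grading on $K(\l)$ is well defined, that $B$ is $\g_{\0}$-simple, and the identification $U(\g)^{(N)}=U(\g_{\0})e_{\mathcal I_1}$—are routine once the grading is in place, and notably none of them requires $\mathbb F$ to be algebraically closed.
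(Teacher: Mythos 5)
Your proof is correct, and it shares its core ingredients with the paper's proof while handling the ``only if'' direction in a genuinely different way. Both arguments rest on the same two facts: every nonzero submodule contains $B=f_{\mathcal I_1}\otimes M_0(\l)$ (push down with the $f_{ij}$, $(i,j)\in\mathcal I_1$, then use $(*)$ and the $U(\g_{\0})$-simplicity of $B$), and the scalar identity $e_{\mathcal I_1}f_{\mathcal I_1}v_{\l}=f(h)(\l)v_{\l}$. Your implication ``$f(h)(\l)\neq 0$ implies $K(\l)$ simple'' is in substance the paper's: from $B\subseteq K$ apply $e_{\mathcal I_1}$ and recover a nonzero multiple of $v_{\l}$. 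For the converse, however, the paper argues by a dimension count: simplicity gives $K(\l)=U(\g_1)B$, and since $\dim U(\g_1)=\dim U(\g_{-1})$ the spanning set $\{e_I m_s\}$ must be a basis, so in particular $e_{\mathcal I_1}f_{\mathcal I_1}\otimes v_{\l}\neq 0$. You instead exploit the $\mathbb Z$-grading: the unique minimal submodule $S=U(\g)B$ is graded, PBW gives that the degree-$N$ component of $U(\g)$ (where $N=mn$) is exactly $U(\g_{\0})e_{\mathcal I_1}$, hence $S^{(0)}=\mathrm{im}\,\theta$ for the $\g_{\0}$-endomorphism $\theta$ of $M_0(\l)$ induced by $\zeta=e_{\mathcal I_1}f_{\mathcal I_1}$ (which does centralize $\g_{\0}$, by $(*)$, its analogue for $e_{\mathcal I_1}$, and the cancellation of the weights $\pm 2\rho_1(m,n)$); simplicity of $K(\l)$ then forces $S^{(0)}=M_0(\l)\neq 0$, i.e.\ $\theta\neq 0$, i.e.\ $f(h)(\l)\neq 0$. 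What your route buys: both implications fall out of one chain of equivalences organized around the minimal submodule $S$ and the Schur-type observation that the endomorphism $\theta$ of the simple $\g_{\0}$-module $M_0(\l)$ is detected by the single scalar $f(h)(\l)$; it also eliminates the dimension count. What the paper's route buys: it is shorter and needs neither the grading on $K(\l)$ nor any commutation facts for $e_{\mathcal I_1}$ (only $(*)$ for $f_{\mathcal I_1}$ is used). One presentational caveat: when you assert ``$v_{\l}\in S$ if and only if $S^{(0)}\neq 0$'', the ``if'' half is not yet justified at that point; it becomes available only through your later identification $S^{(0)}=\mathrm{im}\,\theta$ together with $\theta(v_{\l})=f(h)(\l)v_{\l}$, which you do establish, so the argument closes correctly as a whole.
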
\begin{proof} By the formula $(*)$ above, the subspace $f_{\mathcal I_1}
\otimes M_0(\l)\subseteq K(\l)$ is also a simple $U(\g_{\0})$-module, and which is clearly  annihilated by $\g_{-1}$. It follows that $$U(\g_1)f_{\mathcal I_1}\otimes M_0(\l)=U(\g)f_{\mathcal I_1}
\otimes M_0(\l),$$  is a $U(\g)$-submodule of $ K(\l)$. \par Suppose  $K(\l)$ is simple. Then  $U(\g_1)f_{\mathcal I_1}
\otimes M_0(\l)= K(\l).$ Since  $\text{dim}U(\g_1)=\text{dim}U(\g_{-1})$, $K(\l)$ has  a basis consisting of elements $$e_Im_s, \quad I\subseteq \mathcal I_1, s=1,\dots, r,$$  where $m_1,\dots,m_r$ is a basis of $f_{\mathcal I_1}\otimes M_0(\l)$. Choose $m_1=f_{\mathcal I_1}\otimes v_{\l}$, where $v_{\l}\in M_0(\l)$ is a maximal vector of weight $\l$.  Then we have $$0\neq e_{\mathcal I_1}f_{\mathcal I_1}\otimes v_{\l}=1\otimes f(h)(\l)v_{\l},$$ and hence $ f(h)(\l)\neq 0$.\par
 Suppose  $ f(h)(\l)\neq 0$. Assume $K=K_{\0}\oplus K_{\1}$ is a nonzero submodule of $ K(\l)$. Let $x\in h(K)$ be a nonzero vector.  Apply appropriate $f_{ij}$'s to $x$ to obtain $f_{\mathcal I_1}\otimes m\in K$
 with $0\neq m\in M_0(\l)$. From the formula $(*)$ above it follows that $f_{\mathcal I_1}\otimes v_{\l}\in K$,  since $M_0(\l)$ is a simple $U(\g_{\0})$-module. Then we get $$e_{\mathcal I_1}f_{\mathcal I_1}\otimes v_{\l}=1\otimes f(h)(\l)v_{\l}\in K,$$ so that $v_{\l}\in K$. This gives $K= K(\l)$, and hence $K(\l)$ is simple.
\end{proof}

   \section{The main theorem}
   In this section we give a characteristic free approach to determine the polynomial $f(h)(\l)$.

\begin{lemma} Let $1\leq i\leq m$. Then $e_{i,m+n}f_{>(i,m+n)}v_{\l}=0.$
\end{lemma}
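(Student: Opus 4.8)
The plan is to push $e_{i,m+n}$ to the right past the product $f_{>(i,m+n)}$ and exploit that $v_{\l}$ is annihilated both by $\g_1$ and by every positive even root vector. Since $\g_1$ acts trivially on $M_0(\l)$ we have $e_{i,m+n}v_{\l}=0$, so the super-bracket identity gives $e_{i,m+n}f_{>(i,m+n)}v_{\l}=[e_{i,m+n},f_{>(i,m+n)}]v_{\l}$, the remaining term $\pm f_{>(i,m+n)}e_{i,m+n}v_{\l}$ being zero. Expanding $[e_{i,m+n},f_{>(i,m+n)}]$ by the super-Leibniz formula of Section 2 and evaluating each $[e_{i,m+n},f_{st}]=e_{i,m+n}e_{ts}+e_{ts}e_{i,m+n}$ from the matrix-unit relations, I expect exactly two families of nonzero inner brackets: from the factors $f_{s,m+n}$ with $s>i$ one obtains the even vector $e_{is}$, and from the factors $f_{i,t}$ with $t<m+n$ one obtains $e_{t,m+n}$. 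Both $e_{is}$ (with $i<s\le m$) and $e_{t,m+n}$ (with $m<t<m+n$) are positive even root vectors, so each kills $v_{\l}$; the difficulty is that in the Leibniz expansion they are trapped in the interior of a product of $f$'s and must first be carried to the right.

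For the family coming from $f_{i,t}$, $t<m+n$, I would transport $e_{t,m+n}$ rightward through $f_{>(i,t)}$. The computation $[e_{t,m+n},f_{s',t'}]=\d_{m+n,t'}f_{s',t}$ shows this bracket is nonzero only when $t'=m+n$; but the order $\prec$ forces every pair $(s',m+n)$ to satisfy $(s',m+n)\prec(i,t)$, so no such factor occurs in $f_{>(i,t)}$. Hence $e_{t,m+n}$ commutes with every factor of $f_{>(i,t)}$ and then annihilates $v_{\l}$, killing the whole term. For the family coming from $f_{s,m+n}$, $s>i$, I would transport $e_{is}$ rightward through $f_{>(s,m+n)}$. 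Here $[e_{is},f_{s',t'}]=-\d_{s',i}f_{s,t'}$ is nonzero only for $s'=i$, in which case it replaces the factor $f_{i,t'}$ (necessarily with $t'<m+n$) by $f_{s,t'}$. Since $(s,t')$ again belongs to $>(s,m+n)$, the factor $f_{s,t'}$ already occurs in the product, so after the substitution two equal odd factors appear; as the $f$'s pairwise anticommute and $p>2$, the relation $f_{s,t'}^2=0$ makes each such term vanish, while the fully-commuted leftover dies on $e_{is}v_{\l}=0$.

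Combining the two families yields $[e_{i,m+n},f_{>(i,m+n)}]v_{\l}=0$, and hence the lemma. The matrix-unit bracket computations and the tracking of anticommutation signs are routine and I would suppress them. The crux---and the only place where the precise definition of the order $\prec$ is used in an essential way---is the second family: one must check that the odd vector produced by $[e_{is},f_{i,t'}]$ genuinely duplicates a factor already present in $f_{>(s,m+n)}$, so that $f_{s,t'}^2=0$ applies. The first family is comparatively easy, since there the ordering forces the relevant inner bracket to vanish outright.
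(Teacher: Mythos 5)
Your proposal is correct and is essentially identical to the paper's own proof: the same reduction to $[e_{i,m+n},f_{>(i,m+n)}]v_{\l}$ via $e_{i,m+n}v_{\l}=0$, the same split into the two families of nonzero inner brackets, with the $e_{t,m+n}$-terms vanishing because no factor $f_{s',m+n}$ occurs to their right, and the $e_{is}$-terms vanishing after a second Leibniz expansion because the bracket $[e_{is},f_{i,t'}]=-f_{s,t'}$ duplicates the factor $f_{s,t'}$ already present in $f_{>(i,t')}$, forcing $f_{s,t'}^2=0$. You even track the sign in $[e_{is},f_{i,t'}]$ more carefully than the paper does; no gaps.
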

\begin{proof} For each $(s,t)\succ (i,m+n)$, we have $$[e_{i,m+n}, f_{st}]=\begin{cases}e_{t,m+n},&\text{if $i=s, t<m+n$}\\e_{is},&\text{if $i<s, t=m+n$}\\0,&\text{otherwise.}\end{cases}$$Then we have  $$\begin{aligned}
 e_{i,m+n}f_{> (i,m+n)}v_{\l}&=[e_{(i,m+n)}, f_{> (i,m+n)}]v_{\l}\\&=\sum_{f_{st}\succ f_{i,m+n}}(-1)^{\a_{st}}f_{((i,m+n),(s,t))} [e_{i,m+n}, f_{st}]f_{> (s,t)}v_{\l}\\&=\sum_{s>i, t=m+n}(-1)^{\a_{st}} f_{((i,m+n),(s,t))} e_{is}f_{>(s,m+n)}v_{\l}\\&+\sum_{s=i,t<m+n}(-1)^{\a_{st}} f_{((i,m+n),(s,t))} e_{t,m+n}f_{>(s,t)}v_{\l},\end{aligned}$$ where $\a_{st}\in \mathbb Z_2$. Note that the second summation is equal to zero, since $e_{t,m+n}$ commutes with all $f_{ij}$($(i,j)\in\mathcal I_1$) with $f_{ij}\succ f_{st}$. \par
  We claim that the first summation is also equal to zero. In fact, we have, in the case where $s>i, t=m+n$,   $$\begin{aligned}
 e_{is}f_{>(s,m+n)}v_{\l}& =[e_{is}, f_{>(s,m+n)}]v_{\l}\\&=\sum ^{m+1}_{j=m+n-1}f_{((s,m+n),(i,j))} [e_{is}, f_{ij}]f_{>(i,j)}v_{\l}\\&=\sum ^{m+1}_{j=m+n-1}f_{((s,m+n),(i,j))}f_{sj}f_{>(i,j)}v_{\l}=0,\end{aligned}$$ where the last equality is given by the fact that $f_{sj}\succ f_{ij}$.
\end{proof}

  \begin{theorem}For each $\l\in H^*$, we have $f(h)(\l)=f_{m,n}(\l).$
 \end{theorem}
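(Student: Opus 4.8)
The plan is to prove the identity $e_{\mathcal I_1}f_{\mathcal I_1}v_{\l}=f_{m,n}(\l)v_{\l}$ directly, by induction on $n$, peeling off one ``column'' of odd roots at a time. Write $C=\{(i,m+n)\mid 1\leq i\leq m\}$ for the odd roots involving the last index and $\mathcal I_1'=\mathcal I_1\setminus C$ for the odd roots of $gl(m,n-1)$. Since the elements of $C$ are precisely the $\prec$-minimal elements of $\mathcal I_1$, the ordering conventions give $f_{\mathcal I_1}=f_Cf_{\mathcal I_1'}$ and $e_{\mathcal I_1}=e_{\mathcal I_1'}e_C$, so that $e_{\mathcal I_1}f_{\mathcal I_1}v_{\l}=e_{\mathcal I_1'}\,e_Cf_C\,f_{\mathcal I_1'}v_{\l}$, with the matched factors $e_C=e_{m,m+n}\cdots e_{1,m+n}$ and $f_C=f_{1,m+n}\cdots f_{m,m+n}$ meeting at the ``seam'' $e_{1,m+n}f_{1,m+n}$. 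The crucial feature is that this ordering makes each factor $e_{i,m+n}$ adjacent to its own partner $f_{i,m+n}$ at every stage, so that only the diagonal bracket $[e_{i,m+n},f_{i,m+n}]=h_{i,m+n}$ ever enters, and the troublesome even root vectors $e_{ii'}$ with $i\neq i'$ never arise.

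First I would carry out the column computation. Using $e_{i,m+n}f_{i,m+n}=h_{i,m+n}-f_{i,m+n}e_{i,m+n}$ together with the identity $f_{i+1,m+n}\cdots f_{m,m+n}f_{\mathcal I_1'}=f_{>(i,m+n)}$, the Lemma gives $e_{i,m+n}f_{>(i,m+n)}v_{\l}=0$, which annihilates the lowering term $f_{i,m+n}e_{i,m+n}(\cdots)v_{\l}$. Hence the adjacent pair may be replaced by $h_{i,m+n}$ acting on $f_{>(i,m+n)}v_{\l}$. Pushing $h_{i,m+n}\in H$ to the right past the remaining $f$'s converts it into the scalar $c_i=\big(\l+\text{wt}(f_{>(i,m+n)})\big)(h_{i,m+n})=\big(\l-\sum_{\a\succ(i,m+n)}\a,\ \e_i-\e_{m+n}\big)$, using $\mu(h_\a)=(\mu,\a)$. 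Iterating for $i=1,\dots,m$ consumes all of $e_C$ and $f_C$ and yields $e_{\mathcal I_1}f_{\mathcal I_1}v_{\l}=\big(\prod_{i=1}^m c_i\big)\,e_{\mathcal I_1'}f_{\mathcal I_1'}v_{\l}$.

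It remains to identify the scalars and close the induction, which is pure weight combinatorics. I would verify the identity $\big(-\sum_{\a\succ(i,m+n)}\a,\ \e_i-\e_{m+n}\big)=(\rho(m,n),\e_i-\e_{m+n})$ (both equal $m-i+1-n$), so that $c_i=(\l+\rho(m,n),\e_i-\e_{m+n})$ and $\prod_i c_i$ is exactly the $C$-part of $f_{m,n}(\l)$. By the induction hypothesis $e_{\mathcal I_1'}f_{\mathcal I_1'}v_{\l}=f_{m,n-1}(\l)v_{\l}$ (the residual computation involves only operators of $gl(m,n-1)$, and $v_\l$ remains a maximal vector for its even part), while a second short calculation gives $(\rho(m,n)-\rho(m,n-1),\a)=0$ for every $\a\in\Phi^+_1(m,n-1)$, so that $f_{m,n-1}(\l)=\prod_{\a\in\Phi^+_1(m,n-1)}(\l+\rho(m,n),\a)$. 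Multiplying the two contributions gives $\prod_{\a\in\Phi^+_1}(\l+\rho(m,n),\a)\,v_\l=f_{m,n}(\l)v_\l$, completing the inductive step; the base case $n=0$ is the empty product.

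I expect the main obstacle to be bookkeeping rather than conceptual. Once the Lemma is granted, the only substantive work is verifying the two weight identities above, which are exactly what conjures the $\rho$-shift out of the sum of positive odd roots. Some care is also needed to justify that after peeling the last column the residual expression $e_{\mathcal I_1'}f_{\mathcal I_1'}v_{\l}$ is genuinely the $gl(m,n-1)$ instance, so that the induction applies to the same maximal vector with its weight unchanged, and to confirm that the ordering truly presents a matched pair $e_{i,m+n}f_{i,m+n}$ at the seam at every stage, so that the Lemma is applicable each time and no cross terms $[e_{i,m+n},f_{i',m+n}]$ with $i\neq i'$ survive.
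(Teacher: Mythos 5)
Your proposal is correct and follows essentially the same route as the paper's own proof: peeling the matched pairs $e_{i,m+n}f_{i,m+n}$ off the last column via Lemma 4.1, identifying each resulting scalar with $(\l+\rho(m,n),\e_i-\e_{m+n})$ through the same weight identity (the paper's formula (1), which you verify by computing both sides as $m-i+1-n$), and closing the induction on $n$ with the orthogonality $(\rho(m,n)-\rho(m,n-1),\a)=0$ for $\a\in\Phi^+_1(m,n-1)$. The only cosmetic differences are your base case $n=0$ versus the paper's $n=1$ and your explicit evaluation of the scalars rather than the paper's orthogonality formulation.
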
\begin{proof}We have $$\begin{aligned}
e_{\mathcal I_1}f_{\mathcal I_1}v_{\l}&=e_{>(1,m+n)}(e_{1,m+n}f_{1,m+n})f_{>(1,m+n)}v_{\l}\\&=e_{> (1,m+n)}(e_{11}+e_{m+n,m+n})f_{>(1,m+n)}v_{\l}\\&-e_{> (1,m+n)}f_{1,m+n}e_{1,m+n}f_{> (1,m+n)}v_{\l}\\
 (\text{Using Lemma 4.1})&=e_{>(1,m+n)}(e_{11}+e_{m+n,m+n})f_{> (1,m+n)}v_{\l}\\&=(\l+\a_1)(e_{11}+e_{m+n,m+n})e_{>(1,m+n)}f_{>(1,m+n)}v_{\l}\\&=(\l+\a_1, \e_1-\e_{m+n})e_{>(1,m+n)}f_{>(1,m+n)}v_{\l},\end{aligned}$$ where $\l+\a_1$ is the weight of $f_{> (1,m+n)}v_{\l}$.\par  Using Lemma 4.1, we compute $e_{>(1,m+n)}f_{>(1,m+n)}v_{\l}$ in a similar way.  Continue the process,   we  get $$e_{\mathcal I_1}f_{\mathcal I_1}v_{\l}=\Pi^k_{i=1}(\l+\a_i)(e_{ii}+e_{m+n,m+n}) e_{>(k,m+n)}f_{> (k,m+n)}v_{\l}$$$$=\cdots$$$$=\Pi^m_{i=1}(\l+\a_i)(e_{ii}+e_{m+n,m+n}) e_{\geq(1,m+n-1)}f_{\geq (1,m+n-1)}v_{\l}$$$$=\Pi^m_{i=1}(\l+\a_i, \e_i-\e_{m+n}) e_{\geq(1,m+n-1)}f_{\geq (1,m+n-1)}v_{\l}.$$  For each $1\leq i\leq m$,  it is easily seen that the weight of $f_{>(i,m+n)}v_{\l}$ is  $$\l+\a_i=\l-2\rho_1(m,n)+\sum^i_{k=1}(\e_k-\e_{m+n}).$$
 By a straightforward computation we have,
   for each  $1\leq i\leq m$, $$(1)\quad (-\rho_0(m,n)-\rho_1(m,n)+\sum_{k=1}^i(\e_k-\e_{m+n}),\e_i-\e_{m+n})=0.$$
 Applying the formula (1), we have  $$(\l+\a_i,\e_i-\e_{m+n})=(\l+\rho(m,n),\e_i-\e_{m+n})$$ for any $1\leq i\leq m$, which gives $$f(h)(\l)=\Pi^m_{k=1}(\l+\rho(m,n),\e_k-\e_{m+n})e_{\geq (1,m+n-1)}f_{\geq (1,m+n-1)}v_{\l}.$$
 We now prove the theorem by  induction on $n$. The case $n=1$ follows immediately from the above equation. Assume the case $n-1$ and consider the case $n$.
 Note that  $$\rho(m,n)=\rho (m,n-1)+\frac{1}{2}[\sum_{k>m}(\e_k-\e_{m+n})-\sum_{k\leq m}(\e_k-\e_{m+n})].$$ By a short computation we have $$(\sum_{k>m}(\e_k-\e_{m+n})-\sum_{k\leq m}(\e_k-\e_{m+n}),\e_i-\e_j)=0$$ for all $i\leq m<j<m+n$, so that $$(\l+\rho (m,n-1),\e_i-\e_j)=(\l+\rho(m,n),\e_i-\e_j).$$ Then we have by the induction hypothesis that  $$f(h)(\l)=\Pi^m_{k=1}(\l+\rho(m,n),\e_k-\e_{m+n}) \Pi_{i\leq m<j<m+n}(\l+\rho (m,n-1),\e_i-\e_j)$$$$=\Pi_{(i,j)\in\mathcal I_1}(\l+\rho(m,n),\e_i-\e_j)=f_{m,n}(\l).$$

\end{proof}
\begin{corollary} Let $\g=gl(m,n)$ be defined over $\mathbb F$ with $\text{char.}\mathbb F=p>2$. Then  $K(\l)$ is simple if and only if $\Pi_{(i,j)\in\mathcal I_1}(\l+\rho(m,n), \e_i-\e_j)\neq 0$.
\end{corollary}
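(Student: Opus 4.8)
The plan is to obtain the corollary directly by chaining the two preceding results, Proposition 3.1 and Theorem 4.2, once I have checked that the standing hypothesis $\text{char.}\mathbb F=p>2$ places us in the setting where both apply. First I would record that, since $p>0$, the simple $U(\g_{\0})$-module $M_0(\l)$ is finite dimensional by Jacobson's theorem, whence $K(\l)\cong U(\g_{-1})\otimes_{\mathbb F}M_0(\l)$ is finite dimensional as well; the condition $p>2$ moreover guarantees that $\g=gl(m,n)$ is genuinely a restricted Lie superalgebra, so that all the constructions of Section 2 are available. This finiteness is precisely what the proof of Proposition 3.1 uses when it invokes $\dim U(\g_1)=\dim U(\g_{-1})$ to produce the spanning family $\{e_I m_s\}$.

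With this in hand, Proposition 3.1 supplies the equivalence that $K(\l)$ is simple if and only if $f(h)(\l)\neq 0$, where $f(h)$ is the $U(H)$-component of $e_{\mathcal I_1}f_{\mathcal I_1}$ in the triangular decomposition of $U(\g)$. Theorem 4.2, being characteristic free, then evaluates this scalar explicitly as $f(h)(\l)=f_{m,n}(\l)$. Unwinding the definition of $f_{m,n}$ from Section 2, namely $f_{m,n}(\l)=\Pi_{\a\in\Phi^+_1}(\l+\rho(m,n),\a)$ together with $\Phi^+_1=\{\e_i-\e_j\mid (i,j)\in\mathcal I_1\}$, the product reads $f_{m,n}(\l)=\Pi_{(i,j)\in\mathcal I_1}(\l+\rho(m,n),\e_i-\e_j)$, so the nonvanishing of $f(h)(\l)$ is exactly the nonvanishing of the displayed product.

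Combining the two equivalences then yields the assertion: $K(\l)$ is simple if and only if $\Pi_{(i,j)\in\mathcal I_1}(\l+\rho(m,n),\e_i-\e_j)\neq 0$, i.e. if and only if $\l$ is typical in the sense of Section 2. Because the corollary is a formal consequence of Proposition 3.1 and Theorem 4.2, I do not anticipate any genuine obstacle in the argument; the only point that merits a word is the verification that the finiteness and restrictedness hypotheses hold under $p>2$, since it is this that licenses the application of Proposition 3.1 in the first place.
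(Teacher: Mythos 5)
Your proposal is correct and matches the paper's intent exactly: the corollary is stated as an immediate consequence of Proposition 3.1 (simplicity of $K(\l)$ is equivalent to $f(h)(\l)\neq 0$) and Theorem 4.2 ($f(h)(\l)=f_{m,n}(\l)$), which is precisely the chain you give. Your additional remarks on finite dimensionality via Jacobson's theorem and the unwinding of $f_{m,n}(\l)$ over $\Phi^+_1$ are consistent with the paper's Sections 2 and 3, so nothing is missing.
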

\section{Applications of the main theorem}
In this section, we assume $\g=gl(m,n)$ is defined over a field $\mathbb F$ of characteristic $p>0$. We abbreviate $\rho(m,n)$ to $\rho$.  Then $\g$ is a restricted Lie superalgebra (see\cite{bmp}) with the $p$-map $[p]$ the $p$th power map. \par By \cite{wz,z3}, each simple $\g$-module $M=M_{\0}\oplus M_{\1}$ affords a $p$-character $\chi\in \g_{\0}^*$ such that $$(x^p-x^{[p]}-\chi(x)^p)v=0$$ for all $x\in \g_{\0}, v\in M$. Let $u_{\chi}(\g)$(resp. $u_{\chi}(\g_{\0})$; $u_{\chi}(\g^+)$; $u_{\chi}(\g^-)$) be the quotient superalgebra of $U(\g)$(resp. $U(\g_{\0})$; $U(\g^+)$; $U(\g^-)$) by its $\mathbb Z_2$-graded two-sided ideal generated by the central elements $$x^p-x^{[p]}-\chi(x)^p, x\in \g_{\0}.$$ Then $M$ is a $u_{\chi}(\g)$-module. The superalgebras $u_{\chi}(\g_{\0})$, $u_{\chi}(\g^+)$ and $u_{\chi}(\g^-)$ are all viewed canonically as subalgebras of $u_{\chi}(\g)$. \par
 Recall the subalgebras $\g_1$ and $\g_{-1}$ of $\g$. Let $U(\g_1)$ and $U(\g_{-1})$ be their enveloping algebras, and $u(\g_1)$ and $u(\g_{-1})$ their images in $u_{\chi}(\g)$.  \begin{lemma} There is a $\mathbb F$-vector space isomorphism: $$u_{\chi}(\g)\cong U(\g_{-1})\otimes u_{\chi}(\g_{\0})\otimes U(\g_1).$$
 \end{lemma}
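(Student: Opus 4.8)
The plan is to present $u_{\chi}(\g)$ as $U(\g)/I$, where $I$ is the two-sided ideal of $U(\g)$ generated by the central elements $z_x = x^p - x^{[p]} - \chi(x)^p$ with $x \in \g_{\0}$, and to show that under the PBW decomposition this ideal splits off the $\g_{\0}$-factor cleanly. By the PBW theorem (\cite{bmp}), choosing an ordered homogeneous basis in which the basis of $\g_{-1}$ precedes that of $\g_{\0}$, which in turn precedes that of $\g_1$, the multiplication map $$m : U(\g_{-1}) \otimes U(\g_{\0}) \otimes U(\g_1) \longrightarrow U(\g),\qquad a\otimes b\otimes c\mapsto abc,$$ is an $\mathbb F$-vector space isomorphism; here one uses that each odd root vector squares to $0$ in $U(\g)$ (since $p>2$), so that $U(\g_1)$ and $U(\g_{-1})$ are the exterior algebras on $\g_1$ and $\g_{-1}$.

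First I would verify that each $z_x$ lies in the center of the full superalgebra $U(\g)$, not merely in $U(\g_{\0})$. Writing $L_x$ and $R_x$ for left and right multiplication by $x$ on $U(\g)$, these commute, so in characteristic $p$ one has $L_x^p - R_x^p = (L_x - R_x)^p$. Because $x$ is even, $(L_x - R_x)(y) = [x,y] = (\mathrm{ad}\,x)(y)$ for homogeneous $y \in \g$, whence $[x^p, y] = (\mathrm{ad}\,x)^p(y)$; the restricted axiom $(\mathrm{ad}\,x)^p = \mathrm{ad}(x^{[p]})$ then gives $[z_x, y] = 0$ for all $y \in \g$, so $z_x$ is central. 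Consequently the two-sided ideal they generate is simply $I = \sum_x U(\g)\, z_x$. Let $I_0 \subseteq U(\g_{\0})$ be the ideal generated by the same elements inside $U(\g_{\0})$, so that $u_{\chi}(\g_{\0}) = U(\g_{\0})/I_0$; since the $z_x$ are central in $U(\g_{\0})$ as well, $I_0 = \sum_x U(\g_{\0})\, z_x$.

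The key step is the identification of $I$ inside the PBW decomposition. Using $U(\g) = U(\g_{-1})U(\g_{\0})U(\g_1)$ and moving each central $z_x$ to the left of $U(\g_1)$, I would compute $$I = \sum_x U(\g_{-1}) U(\g_{\0})\, z_x\, U(\g_1) = U(\g_{-1}) \Big( \sum_x U(\g_{\0})\, z_x \Big) U(\g_1) = U(\g_{-1})\, I_0\, U(\g_1).$$ Since $m$ carries the subspace $U(\g_{-1}) \otimes I_0 \otimes U(\g_1)$ exactly onto $U(\g_{-1})\, I_0\, U(\g_1) = I$, injectivity of $m$ yields $m^{-1}(I) = U(\g_{-1}) \otimes I_0 \otimes U(\g_1)$. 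Passing to quotients and using exactness of $\otimes$ over a field, $$u_{\chi}(\g) = U(\g)/I \cong U(\g_{-1}) \otimes \big(U(\g_{\0})/I_0\big) \otimes U(\g_1) = U(\g_{-1}) \otimes u_{\chi}(\g_{\0}) \otimes U(\g_1),$$ which is the asserted isomorphism.

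The main obstacle I anticipate lies in the two bookkeeping steps: verifying centrality of $z_x$ in the \emph{super} setting (making sure the odd $y$ are handled by the same operator identity) and establishing $I = U(\g_{-1})\, I_0\, U(\g_1)$ as a literal equality of subspaces. The latter is where one must check that the super-signs arising when commuting odd factors past one another do not obstruct the factorization; this causes no trouble precisely because each generator $z_x$ is even and central, so it commutes with every factor and can be gathered into the middle tensor slot without sign corrections.
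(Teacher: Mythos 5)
Your proposal is correct and follows essentially the same route as the paper: both identify the ideal $I_\chi$ generated by the central elements $x^p-x^{[p]}-\chi(x)^p$ with $U(\g_{-1})\,I^0_\chi\,U(\g_1)$ under the PBW decomposition $U(\g)\cong U(\g_{-1})\otimes U(\g_{\0})\otimes U(\g_1)$, and then pass to the quotient. The only difference is that you spell out details the paper leaves to a citation (centrality of the generators in the full superalgebra, via $(L_x-R_x)^p=L_x^p-R_x^p$) or leaves implicit (using injectivity of the multiplication map to pull $I_\chi$ back to $U(\g_{-1})\otimes I^0_\chi\otimes U(\g_1)$), which is a welcome tightening rather than a new idea.
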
\begin{proof}Let $I_{\chi}$(resp. $I^0_{\chi}$) be the two-sided ideals of $U(\g)$(resp. $U(\g_{\0})$) generated by the central elements(cf \cite{sf}) $x^p-x^{[p]}-\chi(x)^p$, $x\in \g_{\0}$. By the PBW theorem(see \cite{bmp}) we get $$U(\g)\cong U(\g_{-1})\otimes U(\g_{\0})\otimes U(\g_1).$$ Note that the central elements $x^p-x^{[p]}-\chi(x)^p$, $x\in \g_{\0}$ are all contained in $U(\g_{\0})$. Then we have  $$\begin{aligned}
 I_{\chi}&=\sum _x U(\g)(x^p-x^{[p]}-\chi(x)^p)\\&=\sum_x U(\g_{-1})U(\g_{\0}) U(\g_1)(x^p-x^{[p]}-\chi(x)^p)\\&=U(\g_{-1})\sum_xU(\g_{\0})(x^p-x^{[p]}-\chi(x)^p)U(\g_1)\\&= U(\g_{-1}) I^0_{\chi}U(\g_1)\\&\cong U(\g_{-1})\otimes I_{\chi}^0\otimes U(\g_1),\end{aligned}$$ which gives $$\begin{aligned}
 u_{\chi}(\g)&\cong U(\g_{-1})\otimes U(\g_{\0})\otimes U(\g_1)/U(\g_{-1})\otimes I^0_{\chi}\otimes U(\g_1)\\&\cong U(\g_{-1})\otimes u_{\chi}(\g_{\0})\otimes U(\g_1).\end{aligned}$$
 \end{proof}
 It follows from the lemma that $u(\g_{\pm 1})\cong U(\g_{\pm 1}).$\par
  To study the representations of $u_{\chi}(\g)$,   by applying the automorphisms of the Lie superalgebra $\g$ we may assume $\chi(e_{ij})=0$ for all $(i,j)\in\mathcal I_0$(see \cite{wz}).
Let $M$ be a simple $u_{\chi}(\g_{\0})$-module. Then $M$ contains a maximal vector $v^+$ of weight $\mu$ for some $\mu \in H^*$(cf. \cite{fp}). Since $M$ is simple, $M$ is generated by $v^+$. Denote $M$ by $M(\mu)$. We view $M(\mu)$  as a $u_{\chi}(\g^+)$-module annihilated by $\g_1$. \par Set $$K_{\chi}(\mu)=u_{\chi}(\g)\otimes _{u_{\chi}(\g^+)} M(\mu).$$ Let us note that the maximal vector and the weight $\mu$ need not be unique.\par
 Let $\pi: U(\g)\longrightarrow u_{\chi}(\g)$ be the canonical epimorphism. Then $\pi$ maps $U(\g^+)$(resp. $U(\g^-)$; $U(\g_{\0})$) onto $u_{\chi}(\g^+)$(resp. $u_{\chi}(\g^-)$; $u_{\chi}(\g_{\0})$). Using the epimorphism  $M(\l)$ can be viewed as a $U(\g^+)$-module annihilated by $\g_1$. Define the $\mathbb F$-linear mapping $$\phi: K(\l)=U(\g)\otimes_{U(\g^+)}M(\l)\longrightarrow K_{\chi}(\l)$$ such that  $\phi(u\otimes x)=\pi (u)\otimes x$ for all $u\in U(\g)$, $x\in M(\l)$. It is easily seen that $\phi$ is a $U(\g)$-module epimorphism. By comparing dimensions we obtain that $\phi$ is an isomorphism. \par For  $e_{\mathcal I_1}$, $f_{\mathcal I_1}\in U(\g)$, let us denote their images  in  $u_{\chi}(\g)$ also by the same notation. Let $v_{\l}$ be  a maximal vector in the $u_{\chi}(\g^+)$-module $M(\l)$ of weight $\l$. Then by applying the isomorphism $\phi$ we see that in $K_{\chi}(\l)$ $$\begin{aligned} e_{\mathcal I_1}f_{\mathcal I_1}\otimes v_{\l}&=\phi (e_{\mathcal I_1}f_{\mathcal I_1}\otimes v_{\l})\\&=\phi(\Pi_{\a\in\Phi^+_1}(\l+\rho, \a)v_{\l})\\&=\Pi_{\a\in\Phi^+_1}(\l+\rho)(h_{\a})v_{\l}.\end{aligned}$$

For any $u_{\chi}(\g_{\0})$-module $M$, $M$ is viewed as a $u_{\chi}(\g^+)$-module by letting $\g_1M=0$,  we define the induced functor from the categories of $u_{\chi}(\g_{\0})$-modules to the categories of $u_{\chi}(\g)$-modules by $$\text{Ind}(M)= u_{\chi}(\g)\otimes_{u_{\chi}(\g^+)} M.$$ Clearly Ind is an exact functor and $\text{Ind}(M)=K_{\chi}(\l)$ in case $M$ is a simple $u_{\chi}(\g_{\0})$-module $M(\l)$.  \par
 For any $u(\g_1)$-module $N=N_{\0}\oplus N_{\1}$, let us denote $$N^{\g_1}=\{x\in N|gx=0 \quad\text{for any}\quad g\in \g_1\}.$$
  \begin{lemma}For the left regular $u(\g_1)$-module $u(\g_1)$, we have $u(\g_1)^{\g_1}=\mathbb F e_{\mathcal I_1}$. \end{lemma}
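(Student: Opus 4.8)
The plan is to reduce the statement to a transparent computation inside a Grassmann (exterior) algebra. First I would invoke the remark following the previous lemma, namely $u(\g_1)\cong U(\g_1)$, so that it suffices to work in $U(\g_1)$. The generators $e_{ij}$, $(i,j)\in\mathcal I_1$, are all odd, and a direct check on matrix units shows $[e_{ij},e_{st}]=0$ for all $(i,j),(s,t)\in\mathcal I_1$: the products $e_{ij}e_{st}$ and $e_{st}e_{ij}$ both vanish because the relevant row and column indices never coincide (one index lies in $\{1,\dots,m\}$, the other in $\{m+1,\dots,m+n\}$). Consequently $\g_1$ is an abelian Lie superalgebra concentrated in odd degree, and in $U(\g_1)$ the enveloping relations reduce to $e_{ij}e_{st}=-e_{st}e_{ij}$ together with $e_{ij}^2=0$. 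Thus $U(\g_1)$ is the exterior algebra on the $e_{ij}$, with the basis $\{e_I\mid I\subseteq\mathcal I_1\}$ already introduced in Section 3.

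Next I would record the left action of a single generator on this basis. For $(i,j)\in\mathcal I_1$ and $I\subseteq\mathcal I_1$, multiplication gives $e_{ij}e_I=0$ when $(i,j)\in I$ (anticommuting $e_{ij}$ into place and using $e_{ij}^2=0$), while $e_{ij}e_I=\pm e_{I\cup\{(i,j)\}}$ when $(i,j)\notin I$. The precise sign is irrelevant; what matters is that, restricted to the span of those $e_I$ with $(i,j)\notin I$, the operator $e_{ij}\cdot(-)$ is injective, since distinct such $I$ yield distinct index sets $I\cup\{(i,j)\}$ and hence linearly independent basis images. Therefore the kernel of left multiplication by $e_{ij}$ is exactly $\mathrm{span}\{e_I\mid (i,j)\in I\}$.

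The final step is to intersect these kernels over all $(i,j)\in\mathcal I_1$. Writing a general element as $u=\sum_I c_Ie_I$, the condition $e_{ij}u=0$ forces $c_I=0$ for every $I$ with $(i,j)\notin I$; imposing this simultaneously for all $(i,j)\in\mathcal I_1$ kills every coefficient except $c_{\mathcal I_1}$, so $u\in\mathbb F e_{\mathcal I_1}$. The reverse inclusion is immediate, since $e_{\mathcal I_1}$ already contains every generator as a factor and $e_{ij}^2=0$ gives $e_{ij}e_{\mathcal I_1}=0$. This establishes $u(\g_1)^{\g_1}=\mathbb F e_{\mathcal I_1}$.

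I do not anticipate a serious obstacle: the content is essentially the fact that the top exterior power is the common kernel of the wedge operators. The only point needing care is the sign bookkeeping when anticommuting $e_{ij}$ past the factors of $e_I$, and the verification that no cancellation occurs among the images for different $I$. Both are controlled by the observation that $I\mapsto I\cup\{(i,j)\}$ is injective on subsets avoiding $(i,j)$, which keeps the images $e_{I\cup\{(i,j)\}}$ linearly independent regardless of signs.
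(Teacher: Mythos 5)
Your proof is correct and follows essentially the same route as the paper's own argument: both work with the basis $e_I$, $I\subseteq\mathcal I_1$, of $u(\g_1)\cong U(\g_1)$ and observe that an element with a nonzero coefficient on some $e_I$ with $I\subsetneq\mathcal I_1$ cannot be killed by left multiplication by $e_{ij}$ for $(i,j)\in\mathcal I_1\setminus I$. The paper states this in one line; your version merely makes explicit the exterior-algebra structure, the no-cancellation argument via the injectivity of $I\mapsto I\cup\{(i,j)\}$, and the sign bookkeeping that the paper leaves implicit.
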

  \begin{proof} Clearly we have $e_{\mathcal I_1}\in u(\g_1)^{\g_1}$. Since $u(\g_1)\cong U(\g_1)$,  $u(\g_1)$ has a basis $e_I, I\subseteq \mathcal I_1$. For any nonzero $x=\sum c_Ie_I\in u(\g_1)$, if there is $I\subsetneq \mathcal I_1$ such that $c_I\neq 0$, then we have $e_{ij}x\neq 0$ for some $(i,j)\in\mathcal I_1$. This gives $u(\g_1)^{\g_1}=\mathbb Fe_{\mathcal I_1}$
  \end{proof}
\begin{theorem}If $\chi (h_{\a})\neq 0$ for all $\a\in \Phi_1^+$, then $u_{\chi}(\g_{\0})$ and $u_{\chi}(\g)$ are Morita equivalent.
\end{theorem}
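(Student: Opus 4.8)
The plan is to realize the Morita equivalence through the induction functor $\text{Ind}$ and the $\g_1$-invariants functor, showing they are mutually inverse once the hypothesis $\chi(h_\a)\neq 0$ has been converted into a nondegeneracy statement. First I would reduce everything to the simplicity of \emph{every} Kac module. Let $M(\l)$ be any simple $u_\chi(\g_{\0})$-module with maximal vector $v_\l$ of weight $\l$. Since $h_\a=e_{ii}+e_{jj}$ satisfies $h_\a^{[p]}=h_\a$, the defining relation of $u_\chi(\g_{\0})$ gives $\l(h_\a)^p-\l(h_\a)=\chi(h_\a)^p$ on $v_\l$; because $\rho(h_\a)$ lies in the prime field the Artin--Schreier map $t\mapsto t^p-t$ is additive, so $(\l+\rho)(h_\a)^p-(\l+\rho)(h_\a)=\chi(h_\a)^p$. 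Hence $\chi(h_\a)\neq 0$ forces $(\l+\rho)(h_\a)\neq 0$ for all $\a\in\Phi_1^+$, and via the computation $e_{\mathcal I_1}f_{\mathcal I_1}\otimes v_\l=\Pi_{\a}(\l+\rho)(h_\a)\,v_\l$, the isomorphism $\phi\colon K(\l)\cong K_\chi(\l)$, and Corollary 4.3, the Kac module $K_\chi(\l)=\text{Ind}(M(\l))$ is simple for \emph{every} simple $M(\l)$.

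Next I would set up the two functors. As $\g_{\0}$ normalizes $\g_1$, the assignment $\Gamma(N)=N^{\g_1}=\{x\in N:\g_1 x=0\}$ is a functor from $u_\chi(\g)$-modules to $u_\chi(\g_{\0})$-modules, and Frobenius reciprocity together with the fact that a $u_\chi(\g^+)$-map out of $M$ (on which $\g_1$ acts by $0$) lands in $\g_1$-invariants gives a natural isomorphism $\Hom_{u_\chi(\g)}(\text{Ind}(M),N)\cong\Hom_{u_\chi(\g_{\0})}(M,\Gamma(N))$, so $(\text{Ind},\Gamma)$ is an adjoint pair and $\text{Ind}$ is exact. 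On a simple Kac module the unit and counit are already visibly isomorphisms: $1\otimes M(\l)\subseteq K_\chi(\l)^{\g_1}$, and Lemma 5.2 will show there is no room for further invariants, so $\Gamma(\text{Ind}(M(\l)))=1\otimes M(\l)\cong M(\l)$. Conversely any simple $u_\chi(\g)$-module $S$ has $S^{\g_1}\neq 0$ (since $u(\g_1)\cong\Lambda(\g_1)$ is local), hence contains a simple $u_\chi(\g_{\0})$-submodule $M(\l)$; reciprocity and the simplicity just proved yield $\text{Ind}(M(\l))\cong S$. Thus $\text{Ind}$ induces a bijection between the simple modules of the two algebras.

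The heart of the argument, and the step I expect to be the main obstacle, is upgrading this bijection on simples to an honest equivalence, i.e.\ proving that $\Gamma$ is exact and that the unit and counit are isomorphisms on all modules. The key point is that under $\chi(h_\a)\neq 0$ each $K_\chi(\l)$ is \emph{free} as a $u(\g_1)$-module: with respect to the $\g_{-1}$-degree filtration the action of $e_{\mathcal I_1}$ carries the bottom layer $f_{\mathcal I_1}\otimes M(\l)$ onto $1\otimes M(\l)$ by the invertible scalar $\Pi_\a(\l+\rho)(h_\a)$, a Clifford/Koszul-type nondegeneracy of the pairing of $\g_1$ against $\g_{-1}$, of which Lemma 5.2 is the rank-one instance. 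Since the simple $u_\chi(\g)$-modules are exactly these Kac modules and, over the self-injective algebra $\Lambda(\g_1)$, an extension of a projective module by a projective module splits, every $u_\chi(\g)$-module is then free over $u(\g_1)$; hence $\Gamma$ is exact and on such modules is given by $N\mapsto e_{\mathcal I_1}N$ (Lemma 5.2), while the decomposition of Lemma 5.1 supplies the matching dimension count $\dim\text{Ind}(\Gamma(N))=\dim N$. With both $\text{Ind}$ and $\Gamma$ exact and the unit and counit isomorphisms on every simple module, the five lemma and induction on composition length make them isomorphisms on all modules, so $\text{Ind}$ and $\Gamma$ are inverse equivalences and $u_\chi(\g_{\0})$ and $u_\chi(\g)$ are Morita equivalent. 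I expect the freeness over $u(\g_1)$, equivalently the nondegeneracy of the odd pairing, which is where the hypothesis is genuinely used beyond simplicity, to require the most care.
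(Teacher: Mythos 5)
Your overall architecture coincides with the paper's: simplicity of every $K_{\chi}(\l)$ via the Artin--Schreier relation, the adjunction between $\text{Ind}$ and $(-)^{\g_1}$, freeness over $u(\g_1)$, and then the composition-length bootstrap (which the paper does not write out but delegates to ``a similar argument as that for \cite[Th. 3.2]{fp}''; your last steps are a correct rendering of that argument, and your first two paragraphs are sound). The genuine gap sits exactly at the step you flagged as the crux: freeness of $K_{\chi}(\l)$ over $u(\g_1)$. You justify it by asserting that $e_{\mathcal I_1}$ carries $f_{\mathcal I_1}\otimes M(\l)$ onto $1\otimes M(\l)$ ``by the invertible scalar $\Pi_{\a}(\l+\rho)(h_{\a})$'', and the only evidence offered is the identity $e_{\mathcal I_1}f_{\mathcal I_1}\otimes v_{\l}=\Pi_{\a}(\l+\rho)(h_{\a})v_{\l}$, which concerns the \emph{maximal vector only}. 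For a general $v\in M(\l)$ this is not a formal consequence: in the PBW expansion $e_{\mathcal I_1}f_{\mathcal I_1}=f(h)+\sum u_i^-u_i^0u_i^+$, the terms in which $u_i^{\pm}$ are nonscalar products of \emph{even} root vectors do not die on $1\otimes v$. Already for $gl(2,1)$ one computes, for every $v\in M(\l)$, $e_{\mathcal I_1}f_{\mathcal I_1}\otimes v=1\otimes\bigl((e_{11}+e_{33}+1)(e_{22}+e_{33})-f_{12}e_{12}\bigr)v$, and the operator $f_{12}e_{12}$ is not zero on non-maximal vectors; so the induced operator on $M(\l)$ is a priori not a scalar, and your claim stands unproved. (It is in fact true, but the proof needs an idea you do not supply: $e_{\mathcal I_1}f_{\mathcal I_1}$ has weight $0$ and, by the analogue of formula $(*)$ for $e_{\mathcal I_1}$, commutes with all $e_{ij},f_{ij}$, $(i,j)\in\mathcal I_0$, hence centralizes $\g_{\0}$; therefore $v\mapsto e_{\mathcal I_1}f_{\mathcal I_1}\otimes v$ is a $u_{\chi}(\g_{\0})$-endomorphism of the simple module $M(\l)$ with image in $1\otimes M(\l)$, and Schur's lemma plus nonvanishing on $v_{\l}$ makes it invertible.) Moreover, even granting surjectivity onto the top layer, the inference to freeness of all of $K_{\chi}(\l)$ over the exterior algebra $u(\g_1)$ is left implicit and requires the standard rank argument for modules over $\Lambda(\g_1)$.

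The paper obtains freeness by a different and cleaner device, which is precisely the idea missing from your proposal: once $K_{\chi}(\l)$ is known to be simple, the bottom layer $f_{\mathcal I_1}\otimes M(\l)$ is a $u_{\chi}(\g^-)$-submodule annihilated by $\g_{-1}$ (formula $(*)$ again), so simplicity forces $K_{\chi}(\l)=u(\g_1)\bigl(f_{\mathcal I_1}\otimes M(\l)\bigr)$; consequently the canonical $u_{\chi}(\g)$-map from the \emph{oppositely induced} module $u_{\chi}(\g)\otimes_{u_{\chi}(\g^-)}\bigl(f_{\mathcal I_1}\otimes M(\l)\bigr)$ to $K_{\chi}(\l)$ is surjective, and since both sides have dimension $2^{mn}\dim M(\l)$ by Lemma 5.1, it is an isomorphism. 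This exhibits $K_{\chi}(\l)\cong u(\g_1)\otimes_{\mathbb F}\bigl(f_{\mathcal I_1}\otimes M(\l)\bigr)$ as $u(\g_1)$-modules --- freeness with no pairing computation at all --- after which $K_{\chi}(\l)^{\g_1}=M(\l)$ follows from Lemma 5.2 exactly as you intend. If you replace your ``Clifford/Koszul nondegeneracy'' assertion by either this opposite-induction argument or the Schur-lemma repair sketched above, the remainder of your proof closes correctly.
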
 First we show that $K_{\chi}(\l)$ is simple for any $\l\in H^*$. Let $N=N_{\0}\oplus N_{\1}$ be a nonzero submodule of $K_{\chi}(\l)$. Take a nonzero elements $v\in N$, by applying appropriate $f_{ij}$($(i,j)\in\mathcal I_1$) we get $f_{\mathcal I_1}\otimes x\in N$ for some $0\neq x\in M(\l)$. We may assume $x$ is a weight vector.i.e., $x\in M(\l)_{\mu}$ for some $\mu\in H^*$. Since $M(\l)$ is a simple $u_{\chi}(\g_{\0})$-module, we have $u_{\chi}(\g_{\0})x=M(\l)$. Hence,  there is an element $$f=\sum c_iu_i^-u^0_iu_i^+\in u_{\chi}(\g_{\0})$$ such that $fx=v_{\l}$, where $u_i^-$(resp. $u^+_i$; $u^0_i$) is the product of $f_{ij}$(resp. $e_{ij}; e_{ss}$), $(i,j)\in \mathcal I_0$, $1\leq s\leq m+n$, $c_i\in \mathbb F$ and $v_{\l}$ is a maximal vector  of the weight $\l$. \par Since $x$ is a weight vector, we may assume $f=\sum c_iu_i^-u_i^+$. Since each $e_{ij}$ and $f_{ij}$ with $(i,j)\in\mathcal I_0$ commutes with $f_{\mathcal I_1}$, by applying $f$ to $f_{\mathcal I_1}\otimes x\in N$ we get $f_{\mathcal I_1}\otimes v_{\l}\in N$. Applying $e_{\mathcal I_1}$ to
which we get $$\Pi_{(i,j)\in\mathcal I_1}(\l+\rho,\e_i-\e_j) v_{\l}=\Pi_{\a\in\Phi^+_1}(\l +\rho)(h_{\a})v_{\l}\in N.$$ Note that $h_{\a}^p-h_{\a}^{[p]}-\chi(h_{\a})^p$ in $u_{\chi}(\g)$, so that $$\l (h_{\a})^p-\l (h_{\a})=\chi(h_{\a})^p,$$ which implies that $\l(h_{\a})\notin \mathbb F_p$. Since $\rho (h_{\a})=(\rho, \a)\in \mathbb F_p$ for any $\a\in \Phi^+_1$, we get $\Pi_{\a\in\Phi^+_1}(\l +\rho)(h_{\a})\neq 0$, which gives $v_{\l}\in N$. Therefore $N=K_{\chi}(\l)$, so that $K_{\chi}(\l)$ is simple.\par
 Next we show that
 $K_{\chi}(\l)^{\g_1}=M(\l)$. Note that the subspace $f_{\mathcal I_1}\otimes M(\l)\subseteq K(\l)$ is annihilated by $\g_{-1}$. Since $e_{ij}, f_{ij}, (i,j)\in\mathcal I_0$ commutes with $f_{\mathcal I_1}$, the subspace is a simple $u_{\chi}(\g^-)$-submodule of $K_{\chi}(\l)$.
 Since $K_{\chi}(\l)$ is simple, we have $$\begin{aligned} K_{\chi}(\l)&=u_{\chi}(\g)f_{\mathcal I_1}\otimes M(\l)\\ &=u(\g_1)u_{\chi}(\g_{\0})u(\g_{-1})f_{\mathcal I_1}\otimes M(\l)\\& =u(\g_1)f_{\mathcal I_1}\otimes M(\l).\end{aligned}$$ Set $$K^-_{\chi}(f_{\mathcal I_1}\otimes M(\l))=u_{\chi}(\g)\otimes _{u_{\chi}(\g^-)}(f_{\mathcal I_1}\otimes M(\l)),$$ where $f_{\mathcal I_1}\otimes M(\l)$ is viewed as a $u_{\chi}(\g^-)$-module annihilated by $\g_{-1}$. By the comparison of dimensions we have that $K_{\chi}(\l)$ is isomorphic to $K^-_{\chi}(f_{\mathcal I_1}\otimes M(\l))$ as $u_{\chi}(\g)$-modules. Thus, as $u(\g_1)$-modules, we have $$K_{\chi}(\l)\cong u(\g_1)\otimes_{\mathbb F} f_{\mathcal I_1}\otimes M(\l),$$ from which it follows that $$\begin{aligned} K_{\chi}(\l)^{\g_1}&\cong u(\g_1)^{\g_1}\otimes f_{\mathcal I_1}\otimes M(\l)\\&\cong e_{\mathcal I_1}f_{\mathcal I_1}\otimes M(\l)\\&=M(\l),\end{aligned}$$ where the last equality is given by the fact that $e_{\mathcal I_1}f_{\mathcal I_1}v_{\l}\neq 0$.\par
 From above discussion, we have that the functor $(,)^{\g_1}$ is right adjoint to Ind. By a similar argument as that for \cite[Th. 3.2]{fp}, $u_{\chi}(\g_{\0})$ and $u_{\chi}(\g)$ are Morita equivalent.

\def\refname{\centerline{\bf REFERENCES}}

\end{document}